\newtheorem{thm}{Theorem}%[section]
\newtheorem{cor}[thm]{Corollary}
 \newtheorem{lemma}[thm]{Lemma}
\DeclareMathOperator{\aut}{Aut}
\DeclareMathOperator{\diam}{diam}
\newcommand{\w}{\widetilde}
\renewcommand{\emptyset}{\varnothing}
\newcommand{\ff}{\mathfrak{F}}
\newcommand{\G}{\Gamma}
\newcommand{\gf}{\G_\ff}
\newcommand{\iso}{\mathcal{I}_\ff}
\newcommand{\iss}{\mathcal{I}(G)}
\newcommand{\isss}{\mathcal{I}}
\begin{document}
	\bibliographystyle{amsplain}
	\subjclass{20D60,05C25}
%	\keywords{groups generation; waiting time; profinite groups}
	\title[The non-two-primes graph of a finite group]{The non-two-primes graph of a finite group}
	\author{Karmele Garatea-Zaballa }
	\address{Dipartimento di Matematica \lq\lq Tullio Levi Civita\rq\rq,  Universit\`a 
	di Padova, Via Trieste 63, 35121 Padova, Italy}\email{karmele.garateazaballa@studenti.unipd.it}
	\author{Andrea Lucchini}
	\address{Dipartimento di Matematica \lq\lq Tullio Levi Civita\rq\rq,  Universit\`a 
		di Padova, Via Trieste 63, 35121 Padova, Italy}\email{lucchini@math.unipd.it}

	\begin{abstract}To any finite group $G$, we may associate a graph whose vertices are the elements of $G$ and where two distinct vertices $x$ and $y$ are adjacent if and only if the order of the subgroup $\langle x, y\rangle$ is divisible by at least 3 distinct primes. We prove that the subgraph of this graph induced by the non-isolated vertices is connected and has diameter at most 5.
		\end{abstract}
	\maketitle
\section{Introduction}
Let $\mathfrak F$ be a class of finite groups and $G$ a finite group. We may consider a graph $\w{\G}_\ff(G)$  whose vertices are the elements of $G$ and where two  vertices $g,h\in G$  are connected if and only if $\langle g,h\rangle\notin\ff$. We denote by $\iso(G)$ the set of isolated vertices of $\w{\G}_\ff(G)$. We define the non-$\ff$-graph $\gf(G)$ of $G$  as the subgraph of $\w{\G}_\ff(G)$ obtained by deleting the isolated vertices. 
In the particular case when $\mathfrak F$ is the class $\mathfrak A$ of the abelian groups, the graph $\Gamma_{\mathfrak A}(G)$ has been introduced by Erd\"{o}s and it is known with the name of non-commuting graph (see for example\cite{ncg}, \cite{neu}). If $\mathfrak F$ is the class $\mathfrak N$ of the finite nilpotent groups, then $\Gamma_{\mathfrak N}(G)$ is the non-nilpotent graph, studied for example in \cite{az}. When $\mathfrak F$ is the class $\mathfrak S$ of the finite soluble groups, we obtain the non-soluble graph (see \cite{ns}). The subset $\iso(G)$ is not in general a subgroup of $G$, however this occurs for several saturated formations (see for example \cite[Theorems 1.1 and 1.3]{nof}) and, more in general, we call semiregular a class $\ff$ with the property
that $\iso(G)$ is a subgroup of $G$ for every finite group $G$.
Moreover we call connected a class $\ff$ with the property that the graph $\gf(G)$ is connected for any finite group $G$. The results obtained in \cite{nof} indicate that often a semiregular formation is connected. This occurs for example for the formations of abelian groups, nilpotent groups, soluble groups, supersoluble groups, groups with nilpotent derived subgroup, groups with Fitting length less or equal than $t$ for any $t\in \mathbb N.$ The question whether a semiregular class is necessarily connected has been investigated in \cite{ln}. The answer is negative, however the results presented in \cite{ln} indicate that in many significant cases the connectivity of a class can be proved using its semiregularity property. 

In this paper we investigate the connectivity properties of the non-$\ff$-graph when $\ff$ is the class of the finite groups whose order is divisible by at most two different primes. Notice that by the Burnside’s $p^aq^b$-theorem, this class is contained in the class of finite solvable group. It is interesting to notice that the class $\ff$ is not semiregular. For example, if $G=\langle a, b \mid a^{15}=1, b^2=1, (ab)^2=1\rangle$ is the dihedral group of order 30, then 
 $\iso(G)=\{1,a^3,a^5,a^6,a^9,a^{10},a^{12}\}$ is not a subgroup of $G.$ Thus the methods developed in \cite{nof} and \cite{ns} cannot help to investigate whether the non-$\ff$-graph is connected and ad-hoc arguments are needed. Our main result is the following:
\begin{thm}\label{main}
Let $\ff$ be the class of the finite groups whose order is divisible by at most two different primes. For every finite group $G$, the non-$\ff$-graph $\gf(G)$ is connected and its diameter is at most 5.
\end{thm}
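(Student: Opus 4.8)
The plan is to work inside the subgraph $\gf(G)$ induced by the non-isolated vertices and to show that every such vertex can be joined by a short path to a "hub" — an element $h$ of prime order such that $C_G(h)$ is small — or, failing that, to a pair of elements generating a subgroup of order divisible by three primes that sits inside a fixed local piece of $G$. The key observation is that $x$ is non-isolated precisely when there exists $y$ with $|\langle x,y\rangle|$ divisible by at least three primes; in particular $|G|$ itself is divisible by at least three primes, say $p<q<r$ divide $|G|$. First I would fix a Sylow $p$-subgroup $P$, a Sylow $q$-subgroup $Q$ and a Sylow $r$-subgroup $R$, and pick non-identity elements $u\in P$, $v\in Q$, $w\in R$. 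The element $x$ is adjacent to $u$, to $v$ or to $w$ as soon as $\langle x\rangle$ together with that Sylow element spans something of order divisible by three primes; the difficulty is that this need not happen for a single choice, so one needs to move $x$ first to an element whose order is divisible by the "right" primes.

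The main technical step I expect to carry the argument is a reduction lemma: if $x$ is non-isolated, then $x$ is at distance at most $2$ (maybe $1$) in $\gf(G)$ from some element $z$ whose order is divisible by at least two distinct primes, or from some element of prime-power order lying in a specified Sylow subgroup. Concretely, since $x$ is non-isolated there is a neighbour $y$ with $|\langle x,y\rangle|$ divisible by three primes; inside $\langle x,y\rangle$ (or inside $G$) one can find an element $z$ of order divisible by two of those primes — e.g. using the solvability of $\langle x,y\rangle$ coming from Burnside, pass to a Hall subgroup or to a suitable product of commuting Sylow elements of $\langle x,y\rangle$ — and check that $x\sim z$ and that $z\sim($almost everything$)$. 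The point of producing such a $z$ is that an element of order divisible by two primes is adjacent to every element of order divisible by a third prime, which drastically enlarges its neighbourhood.

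With that in hand the skeleton of the bound is: given non-isolated $x$ and $x'$, move $x\to z$ and $x'\to z'$ with $z,z'$ of "two-prime order" (cost $\le 2$ each), then join $z$ to $z'$ through a common neighbour — any element whose order involves a prime not dividing $|\langle z\rangle|$ together with, if necessary, one intermediate element taken from a fixed Sylow subgroup for the prime avoided by both — giving a middle segment of length $\le 1$ and a total of at most $2+1+2=5$. The bookkeeping of which primes are "used up" is where the casework concentrates: one must treat separately the situation where $|G|$ has exactly three prime divisors (so there is no slack) versus four or more, and within the three-prime case handle the possibility that $z$ and $z'$ have the same two-prime order, in which case the connecting vertex must be chosen of order divisible by the third prime and one verifies directly that such a vertex is non-isolated and adjacent to both. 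I would expect the hardest part to be exactly this: guaranteeing that the intermediate vertices produced are themselves non-isolated (hence genuinely lie in $\gf(G)$) and controlling the degenerate three-prime configurations — the dihedral example of order $30$ in the introduction is precisely the warning that the naive argument can fail, so the proof must show that even there a length-$\le 5$ path survives.
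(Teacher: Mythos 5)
There is a genuine gap, in fact two. First, your whole strategy funnels every non-isolated vertex towards an element $z$ with $\tilde\pi(z)\geq 2$, but such elements need not exist when $G$ is not solvable: Higman's theorem only covers solvable groups, and there are non-solvable groups (e.g.\ $\psl(2,8)$) in which every element has prime power order, so the hub you rely on can be empty and nothing in the proposal replaces it. The paper treats elements outside the soluble radical $R$ by a completely different tool, the theorem of Guralnick, Kunyavski\u{\i}, Plotkin and Shalev, which supplies a common neighbour for any two elements of $G\setminus R$ (hence distance at most $2$ there), and then assembles the bound from the decomposition of $G\setminus \iss$ into $\isss(R)\setminus\iss$, $R\setminus\isss(R)$ and $G\setminus R$. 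Relatedly, your appeal to Burnside for the solvability of $\langle x,y\rangle$ is backwards: that subgroup has order divisible by at least three primes, so Burnside says nothing about it. Even in the solvable case, the reduction ``$x$ is within distance $2$ of some $z$ with $\tilde\pi(z)\geq 2$'' is precisely Lemma \ref{indbound} of the paper, and its proof is not ``pick $z$ in $\langle x,y\rangle$ and check $x\sim z$'': the adjacency can fail, and repairing it requires an induction on $|G|$, a minimal normal subgroup, and the fixed-point-free perturbation argument of Lemmas \ref{rdivides} and \ref{fpf}. This machinery is absent from the proposal.

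Second, even granting the reduction, your arithmetic does not reach $5$. Two elements $z,z'$ with $\tilde\pi(z)=\tilde\pi(z')=2$ satisfy $d(z,z')\leq 2$, and when $\pi(z)=\pi(z')$ the connecting vertex of order divisible by the third prime is genuinely needed, so the skeleton gives $2+2+2=6$, not $2+1+2=5$; your own caveat about the equal-$\pi$ case concedes exactly this extra edge. Shaving the bound to $5$ is where the paper works hardest: for solvable $G$ with $\tilde\pi(G)\geq 4$ it gets diameter at most $3$ via Lucido's theorem on the prime graph (Lemma \ref{soluno}); for $\tilde\pi(G)=3$ it first shows, using the Gruenberg--Kegel/Williams description of groups with disconnected prime graph and the structure of Frobenius and $2$-Frobenius groups, that diameter larger than $4$ forces the prime graph to be a path $p-r-q$ (Lemma \ref{long}), and then in Lemma \ref{soldue} analyses the problematic set $\Lambda$ of $r$-elements all of whose neighbours are $r$-elements, producing for each such $g$ two vertices at distance $2$ from $g$ whose orders jointly cover all three primes. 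Without these (or equivalent) arguments the proposal would at best yield connectivity with a weaker diameter bound for solvable groups, and no bound at all in the non-solvable case.
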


We don't know whether the bound on the diameter is sharp but we may exhibit an example of a group $G$ such that $\gf(G)$ has diameter at least 3. Indeed let
$H={\rm{SL}}(2,3).$ Then $H$ has a faithful irreducible action on $A=C_3\times C_3$ and a non-trivial action on $B=C_7.$ Let $G=(A\times B)\rtimes H$ be the semidirect product with respect of these actions and let $x$ and $y$ be two elements of $G$ of order, respectively, 7 and 2. If an element $g$ of $G$ is adjacent to $x$ in $\gf(G)$ then $|g|=6,$ while if $g$ is adjacent to $y$ then $|g|\in \{14,21,28\},$ so the distance in $\gf(G)$ between $x$ and $y$ is at least 3.

\section{Notations and preliminary results}

Given a finite group $G$, we will denote by $\Gamma(G)$ the graph $\gf(G),$ being $\ff$ the class of the finite groups whose order is divisible by at most two different primes. We will denote by $\iss$
the set of the isolated vertices of $\Gamma(G)$, by $\diam(\Gamma(G))$ the diameter of $\Gamma(G)$ and by $d(x,y)$ the distance between two vertices $x, y$ of $\Gamma(G).$ Moreover we will denote by $\pi(G)$ the set of the prime divisors of the order of $G$ and by $\tilde\pi(G)$ the cardinality of $\pi(G).$ In a similar way, if $g \in G$, then $\pi(g)$ will denote the set of the prime divisors of the order of $g$ and $\tilde\pi(g)$ the cardinality of this set. With $\Sigma(G)$ we will denote the set of the elements $g$ of $G$ with $\tilde\pi(g)\geq 2.$
Finally, for every product $n=p_1\cdots p_t$ of distinct primes, we will denote by $\Omega_n(G)$ the set of elements $g\in G \setminus \iss$ whose order is divisible by $n$, but not by any prime $q\notin \{p_1,\dots,p_t\}.$

\

The following result by Higman will play a crucial role in our proofs.

\begin{thm}\cite[Theorem 1]{hig}
	Let $G$ be a finite solvable group all of whose elements have prime power order. Then $G$ has order divisible by at most two primes.
\end{thm}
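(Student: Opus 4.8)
The plan is to use the Fitting subgroup together with the structure theory of Frobenius groups and their complements. Throughout let $G$ be a finite solvable group in which every element has prime power order (an \emph{EPPO} group); we may assume $G\neq 1$. First I would show that the Fitting subgroup $F=F(G)$ is a $p$-group for a single prime $p$. Indeed $F$ is nilpotent, hence the direct product of its Sylow subgroups; if two distinct primes $s,t$ divided $|F|$, then choosing nontrivial $s$- and $t$-elements in the corresponding commuting direct factors would produce an element of order divisible by $st$, contradicting the hypothesis. Since $G$ is a nontrivial solvable group, $F\neq 1$, so $F$ is a nontrivial $p$-group.

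Next I would establish that every $p'$-element of $G$ acts fixed-point-freely on $F$. If $k\neq 1$ is a $p'$-element and $1\neq x\in C_F(k)$, then $x$ is a nontrivial $p$-element commuting with $k$, so $xk$ has order divisible by $p$ and by some prime dividing $|k|$, again contradicting the EPPO hypothesis; hence $C_F(k)=1$. If $G$ is a $p$-group we are done, so assume otherwise and let $K$ be a Hall $p'$-subgroup of $G$, which exists by Hall's theorem and is nontrivial. Because $F\trianglelefteq G$, the product $FK$ is a subgroup, $F\cap K=1$, and every nonidentity element of $K$ acts without fixed points on $F$; thus $FK=F\rtimes K$ is a Frobenius group with kernel $F$ and complement $K$. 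In particular $K$ is a Frobenius complement. Since $\pi(G)=\{p\}\cup\pi(K)$, it now suffices to prove that $K$ is a $q$-group for a single prime $q$ (or trivial).

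Finally I would combine the EPPO property of $K$ with the structure of Frobenius complements to force $|\pi(K)|\le 1$. A Frobenius complement of even order contains a unique involution, which is therefore central; if $K$ had even order and some odd prime $r\in\pi(K)$, multiplying this central involution by an element of order $r$ would yield an element of order $2r$, impossible. Hence either $\pi(K)=\{2\}$ or $K$ has odd order. In the latter case every Sylow subgroup of $K$ is cyclic, so $K$ is a metacyclic $Z$-group; passing to a Hall $\{q,r\}$-subgroup for two distinct primes $q,r\in\pi(K)$ and using that no cyclic subgroup may have order divisible by two primes, one finds that this Hall subgroup is a nonabelian group $C_{q^a}\rtimes C_{r^b}$ in which the complement acts fixed-point-freely, hence contains a nonabelian subgroup of order $qr$; this contradicts the fact that in a Frobenius complement every subgroup of order $qr$ is cyclic. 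Therefore $K$ is a $q$-group and $|\pi(G)|\le 2$. The main obstacle is precisely this last step: it relies essentially on the classical structure theory of Frobenius complements (cyclic or generalized quaternion Sylow subgroups, a unique central involution in the even-order case, and the cyclicity of subgroups of order $pq$), and the whole argument hinges on first recognizing $G$ as a Frobenius group through the fixed-point-free action on $F(G)$.
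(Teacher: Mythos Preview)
The paper does not prove this statement at all: it is quoted verbatim as Higman's theorem and used as a black box (the only content following it is the one-line Corollary~\ref{higman}). So there is no in-paper argument to compare against.

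Your proof is correct. The reduction to a Frobenius complement is clean: EPPO forces $F(G)$ to be a $p$-group, and any nontrivial $p'$-element centralising a nontrivial element of $F(G)$ would manufacture an element of mixed order, so a Hall $p'$-subgroup $K$ acts fixed-point-freely on $F(G)$ and is a Frobenius complement. In the even case your unique-involution argument is fine (the involution inverts the kernel, so two of them would multiply to an element centralising the kernel). In the odd case your chain of deductions also holds: all Sylow subgroups of $K$ are cyclic, a Hall $\{q,r\}$-subgroup is a $Z$-group $C_{q^a}\rtimes C_{r^b}$, EPPO makes the action fixed-point-free, and the resulting nonabelian subgroup of order $qr$ contradicts the classical fact that subgroups of order $qr$ in a Frobenius complement are cyclic. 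As you note, this last step really does lean on the Zassenhaus-type structure theory of Frobenius complements; that dependence is unavoidable in this style of argument, and Higman's original proof likewise goes through a careful case analysis, so you are not taking an unnecessarily heavy route.
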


\begin{cor}\label{higman}
If $G$ is a finite solvable group and $\iss\neq G,$ then $\Sigma(G)\neq \emptyset.$
\end{cor}

\begin{lemma}
	\label{rdivides}
	Let $G$ be a finite group and let $N$ be a non-trivial normal subgroup of $G$
	such that $|N|=p^a$ for some prime $p.$ For every pair $x_1, x_2$ of elements of $G,$ there exist $n_1,n_2\in N$ such that $p$ divides the order of $\langle x_1n_1, x_2n_2\rangle.$
\end{lemma}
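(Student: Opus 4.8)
The plan is to reduce to the case where $N$ is elementary abelian and then conclude by a short counting argument inside $N\rtimes\langle x_1,x_2\rangle$. First, since $N\neq 1$ is a $p$-group, it contains a minimal normal subgroup $M$ of $G$, which is necessarily elementary abelian; as $M\subseteq N$, a choice of $m_1,m_2\in M$ with $p\mid|\langle x_1m_1,x_2m_2\rangle|$ already settles the lemma, so I may assume from the start that $N$ is elementary abelian of order $p^a$ with $a\geq 1$. Next, if $p$ already divides $|\langle x_1,x_2\rangle|$ we are done with $n_1=n_2=1$, so assume $p\nmid|\langle x_1,x_2\rangle|$. Writing $K=\langle x_1,x_2\rangle$, the group $K\cap N$ is a $p$-subgroup of the $p'$-group $K$, hence trivial, so $G_0:=KN=N\rtimes K$ and $N$ is a normal Sylow $p$-subgroup of $G_0$. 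Consequently, for every subgroup $L\leq G_0$ one has $|L\cap N|=|L|_p$ (because $L/(L\cap N)\cong LN/N$ has order prime to $p$), so $p$ divides $|L|$ if and only if $L\cap N\neq 1$.

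Now I consider the $|N|^2=p^{2a}$ pairs $(y_1,y_2)\in x_1N\times x_2N$. For each such pair, $y_i\equiv x_i\pmod{N}$, so $\langle y_1,y_2\rangle$ maps onto $\langle x_1,x_2\rangle N/N=G_0/N$, that is $\langle y_1,y_2\rangle N=G_0$. Hence either $\langle y_1,y_2\rangle\cap N\neq 1$ — in which case $p$ divides $|\langle y_1,y_2\rangle|$ and, writing $y_i=x_in_i$ with $n_i\in N$, we are done — or $\langle y_1,y_2\rangle$ is a complement to $N$ in $G_0$. So it suffices to show that not every pair $(y_1,y_2)$ yields a complement.

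To bound the number of complements: by the Schur–Zassenhaus theorem (whose conjugacy part applies since $N$ is soluble) every complement to $N$ in $G_0$ is conjugate to $K$, and a direct computation shows that any $G_0$-conjugate of $K$ is in fact an $N$-conjugate of $K$; hence there are at most $|N|=p^a$ complements. On the other hand, any complement $L$ meets each coset $x_iN$ in exactly one element, since $LN=G_0$ forces $|L\cap x_iN|\geq 1$ while $L\cap N=1$ forces $|L\cap x_iN|\leq 1$. Therefore the map $(y_1,y_2)\mapsto\langle y_1,y_2\rangle$ is injective on the set of pairs yielding a complement: if $\langle y_1,y_2\rangle=\langle y_1',y_2'\rangle=L$ with $y_i,y_i'\in x_iN$, then $y_i,y_i'\in L\cap x_iN$, so $y_i=y_i'$. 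Since $p^{2a}>p^{a}$, some pair $(y_1,y_2)$ does not yield a complement, and for it $\langle y_1,y_2\rangle\cap N\neq 1$, completing the proof.

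I do not anticipate a serious obstacle here: the only mildly delicate points are the reduction to the elementary abelian case and the bookkeeping with the $N$-cosets, after which the combinatorics ($p^{2a}$ pairs against at most $p^{a}$ complements) does the rest. (One may also replace the count of complements by the vanishing $H^1(K,N)=0$, valid because $\gcd(|K|,|N|)=1$, which identifies $Z^1(K,N)=B^1(K,N)$, a group of order at most $|N|$; a complement containing $y_1\in x_1N$ and $y_2\in x_2N$ corresponds to the unique cocycle taking prescribed values on the generators $x_1N$-, $x_2N$-representatives, giving the same injectivity.)
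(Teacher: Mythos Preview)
Your proof is correct and follows essentially the same approach as the paper's: both reduce to working inside $\langle x_1,x_2\rangle N$, assume $p\nmid|\langle x_1,x_2\rangle|$, and then count complements of $N$, observing that the $|N|^2$ pairs $(x_1n_1,x_2n_2)$ would yield $|N|^2$ distinct complements while Schur--Zassenhaus bounds the number of complements by $|N|$. You spell out a few details (why each complement meets each coset $x_iN$ in a single element, why $G_0$-conjugates of $K$ are $N$-conjugates) that the paper leaves implicit, but the argument is the same; note also that neither proof actually uses the reduction to the elementary abelian case in the main counting step.
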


\begin{proof}
It is not restrictive to assume that $G=\langle x_1, x_2\rangle N$ and that $N$ is a minimal normal subgroup of $G$. Let $H=\langle x_1,x_2\rangle.$ We may assume that $p$ does not divide $|H|$, which implies that $H$ is complement of $N$ in $G.$ Assume that our statement is false. Then $\langle x_1n_1, x_2n_2\rangle$ is a complement of $N$ in $G$ for every $n_1, n_2\in N.$ Moreover if $\langle x_1n_1, x_2n_2\rangle=\langle x_1m_1, x_2m_2\rangle$ with $n_1, n_2, m_1, m_2\in N,$ then $n_1=m_1$ and $n_2=m_2.$
Consequently, there are $\vert N \vert^2$ complements for $N$ in $G$. However, by Schur-Zassenhaus theorem, all these complements are conjugate to $H$, so $|N|^2\leq |G:N_G(H)|\leq |G:H|=|N|,$ a contradiction. 
\end{proof}

Let $A$ and $N$ be finite groups, and suppose that $A$ acts on $N$ via automorphism. The action of $A$ on $N$ is said to be Frobenius if $n^a\neq n$ whenever $n\in N$ and $a\in A$ are nonindentity elements. 
Equivalently, the action of $A$ on $N$ is Frobenius if and all if $C_N(a)=1$ for all nonindentity elements $a\in A.$

%\begin{lemma}\label{facile}Let $N$ be a normal subgroup of a finite group $G$ and let $g$ in $G$. If $C_N(g)=1,$ then all the elements of the coset $Ng$ are conjugated to $n.$\end{lemma}

\begin{lemma}\label{fpf}Let $N$ be a non-trivial normal subgroup of a finite group $G$ and $x,y \in G.$ If $N$ is a $p$-group and $C_N(x)=1,$ then there exists $n\in N$ such that $p$ divides the order of $\langle x, yn\rangle.$
\end{lemma}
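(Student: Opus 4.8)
The plan is to exploit the fixed-point-freeness of $x$ on $N$ to pin down the structure of $\langle x, yn\rangle$ as $n$ ranges over $N$. First I would reduce to the case $G = \langle x, y\rangle N$ and, if convenient, to $N$ a minimal normal subgroup on which $x$ still acts without nontrivial fixed points (this is harmless since any $x$-invariant subgroup of $N$ inherits $C(x) = 1$). As in Lemma \ref{rdivides}, I may assume $p \nmid |\langle x, y\rangle|$, since otherwise $n = 1$ already works. Write $H_n = \langle x, yn\rangle$. If $p$ never divides $|H_n|$, then each $H_n$ is a $p'$-subgroup; I want to derive a contradiction from this by a counting argument analogous to the one in Lemma \ref{rdivides}, but now the parametrization is by a single element $n \in N$ rather than a pair, so I expect the bound to come out as $|N| \le |G{:}N_G(H)|$ for a suitable complement-type subgroup $H$.

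The key mechanism should be the following. Since $C_N(x) = 1$, the map $N \to N$, $m \mapsto [m, x] = m^{-1} m^x$ is a bijection (standard consequence of fixed-point-free action on a finite group, here a $p$-group). Consider the coset $\langle x\rangle yN$ or, more precisely, track the conjugates of $x$ inside $H_n$. If $p \nmid |H_n|$, then $H_n \cap N = 1$, so $|H_n| = |H_n N : N| \le |\langle x, y\rangle N : N|$; I want to show that distinct values of $n$ in a suitable transversal give rise to $H_n$ which are either equal with controlled multiplicity or are distinct complements, and then invoke Schur--Zassenhaus to conclude they are all conjugate in $G$, forcing $|N| \le |G : H_1|$, which is too small. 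The fixed-point-free hypothesis enters precisely to guarantee that conjugating $x$ by elements of $N$ produces $|N|$ distinct elements (since $x^m = x^{m'} \iff mm'^{-1} \in C_N(x) = 1$), which is what drives the count.

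The main obstacle I anticipate is the bookkeeping of how many values of $n$ yield the same subgroup $H_n = \langle x, yn\rangle$: unlike the cleaner situation of Lemma \ref{rdivides} where both generators are perturbed independently, here only the second generator moves, so $H_n = H_{n'}$ can happen for genuinely different $n, n'$ and I must bound this redundancy. I would handle this by fixing a complement $K$ of $N$ in $G$ containing $\langle x\rangle$ (possible since $\langle x \rangle$ is a $p'$-group and $N$ is the Sylow $p$-subgroup of the relevant subgroup, up to passing to $\langle x\rangle N$ first), writing $y = y_0 m_0$ with $y_0 \in K$, and reparametrizing so that $yn$ ranges over a full coset $y_0 N$; then $H_n$ is a complement of $N$ in $\langle x, y_0\rangle N = G$ for every $n$, the number of such complements containing a fixed conjugate of $\langle x\rangle$ is controlled, and Schur--Zassenhaus gives that all complements are $G$-conjugate, yielding the contradiction $|N|/|C_N(x)| = |N| \le |G : N_G(H_1)| \le |G : H_1| = |N|$ only if every complement is normal — which, combined with the fixed-point-free action, is absurd unless $N = 1$. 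I expect the final contradiction to fall out cleanly once the redundancy count is set up correctly, exactly paralleling the endgame of Lemma \ref{rdivides}.
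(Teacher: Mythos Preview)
Your counting approach can be made to work, but the endgame you sketch is wrong. From the chain $|N| \le |G:N_G(H_1)| \le |G:H_1| = |N|$ you conclude ``every complement is normal''; equality here actually says $N_G(H_1)=H_1$, i.e.\ $H_1$ is \emph{self-normalizing}, which is perfectly compatible with a fixed-point-free action (it is exactly what happens in a Frobenius group). So no contradiction arises at that point. The argument can be rescued by one further observation: since $H_n\cap N=1$, the $|N|$ subgroups $H_n$ are pairwise distinct complements, hence they are \emph{all} the complements of $N$ in $G$; but every $H_n$ contains $x$, whereas for $1\neq m\in N$ the complement $H_1^{\,m}$ does not contain $x$ (because $x^{m^{-1}}=x[x,m^{-1}]$ with $[x,m^{-1}]\in N\setminus\{1\}$, so $x^{m^{-1}}\notin H_1$). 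That is the contradiction you need.

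By contrast, the paper's proof avoids this entire detour. It simply invokes Lemma~\ref{rdivides} to obtain $n_1,n_2\in N$ with $p\mid |\langle xn_1,yn_2\rangle|$, and then uses $C_N(x)=1$ only to note that $xn_1$ is $N$-conjugate to $x$: the map $m\mapsto [x,m]$ is a bijection of $N$, so the $N$-conjugacy class of $x$ is the whole coset $xN$. Conjugating the pair $(xn_1,yn_2)$ by the appropriate $m\in N$ sends the first generator to $x$ and the second to some $yn$ with $n\in N$, and the order of the generated subgroup is unchanged. Two lines, no counting, no Schur--Zassenhaus.
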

\begin{proof}
	By Lemma \ref{rdivides}, there exists $n_1,n_2\in N$ such that $p$ divides  the order of $\langle xn_1, yn_2\rangle.$ Since $C_N(x)=1,$ there exists $n\in N$ such that $x=(xn_1)^m$. But then $p$ divides the order of $\langle (xn_1)^m, (yn_2)^m\rangle=
	\langle x, yn\rangle,$ with $n=[y,m]n_2^m.$
\end{proof}

\begin{lemma}\label{caso3}
	If there exists $g\in G$ with $\tilde \pi(g)\geq 3,$ then $\iss=\emptyset$ and ${{\rm{diam}}(\Gamma(G)})\leq 2.$
\end{lemma}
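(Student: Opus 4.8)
The plan is to exploit the hypothesis that some element $g$ has at least three prime divisors in its order: such a $g$ is automatically adjacent to a large supply of vertices, and in fact it will turn out to be adjacent to every vertex of $\Gamma(G)$ that is not isolated. First I would argue that $g \notin \iss$ (it is adjacent to itself, or more precisely $\langle g, g\rangle = \langle g\rangle$ has order divisible by at least $3$ primes, so $g$ is non-isolated). Next, fix any other non-isolated vertex $x$; the goal is to show $d(x,g)\le 2$, which immediately gives both $\iss = \emptyset$ (so that every vertex is in $\Gamma(G)$) and $\diam(\Gamma(G))\le 2$.

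The key observation is that $g$ is adjacent to every $h\in G$ for which $\langle g, h\rangle$ has order divisible by at least three primes, and this certainly holds whenever $\pi(g) \not\subseteq \pi(h)$ is \emph{not} the issue — rather, since $|\langle g\rangle|$ already contributes three primes to $|\langle g,h\rangle|$, the subgroup $\langle g,h\rangle$ is \emph{always} divisible by at least three primes, for \emph{every} $h \in G$. Hence $g$ is adjacent to all of $G \setminus \{g\}$, in particular to every non-isolated vertex. This already shows $g$ is a universal vertex of $\w{\G}_\ff(G)$ among non-isolated vertices, so any two non-isolated vertices $x,y$ satisfy $d(x,y)\le d(x,g)+d(g,y)\le 1+1 = 2$. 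It also shows that if $x$ is any vertex at all with $x\neq g$, then $x$ is adjacent to $g$, hence $x$ is not isolated; therefore $\iss = \emptyset$.

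The only genuinely routine point to verify carefully is the claim ``$|\langle g,h\rangle|$ is divisible by at least three primes for all $h$'': this is immediate because $\langle g\rangle \le \langle g,h\rangle$ forces $\pi(g) \subseteq \pi(\langle g,h\rangle)$, and $|\pi(g)| = \tilde\pi(g) \ge 3$ by hypothesis. There is no real obstacle here; the lemma is essentially a formal consequence of the definition of the graph together with the fact that the order of a subgroup is divisible by the order of any of its elements. One should just be mildly attentive to the degenerate case $G = \langle g\rangle$, where $\Gamma(G)$ consists of the single universal vertex situation: there $\iss = \emptyset$ still holds (every non-identity power of $g$ generates, together with $g$, a group of order divisible by $\ge 3$ primes; and $1$ together with $g$ likewise), and the diameter bound is vacuous or trivially satisfied.
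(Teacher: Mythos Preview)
Your proposal is correct and follows essentially the same approach as the paper: both observe that since $\langle g\rangle\le\langle g,h\rangle$ forces $\tilde\pi(\langle g,h\rangle)\ge 3$ for every $h\in G$, the element $g$ is adjacent to every other vertex, whence $\iss=\emptyset$ and $\diam(\Gamma(G))\le 2$. The only cosmetic slip is the phrase ``adjacent to itself'' (edges join \emph{distinct} vertices by definition), but this is immediately superseded by your universal-vertex argument, so there is no actual gap.
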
	

\begin{proof}
	If $\tilde \pi(g)\geq 3,$ then every other element of $G$ is adjacent to $g$ in $\Gamma(G).$ This implies ${{\rm{diam}}(\Gamma(G)})\leq 2.$
\end{proof}

\begin{lemma}\label{twodiv} If $x, y \in \Sigma(G)$ and $\tilde \pi(G)\geq 3,$ then $d(x,y)\leq 2.$
	\end{lemma}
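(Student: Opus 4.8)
The plan is to reduce everything to the elementary remark that for any $x,z\in G$ the subgroup $\langle x,z\rangle$ contains the cyclic groups $\langle x\rangle$ and $\langle z\rangle$, so that $\pi(x)\cup\pi(z)\subseteq\pi(\langle x,z\rangle)$; in particular, as soon as $|\pi(x)\cup\pi(z)|\ge 3$ we have $\langle x,z\rangle\notin\ff$, hence $x$ and $z$ are adjacent in $\w{\G}_\ff(G)$, and being then non-isolated they are adjacent in $\Gamma(G)$. (Note also that since $\tilde\pi(G)\ge 3$, every $x\in\Sigma(G)$ is adjacent in $\Gamma(G)$ to any element of order a prime $r\notin\pi(x)$, so the elements of $\Sigma(G)$ are indeed vertices of $\Gamma(G)$ and $d(x,y)$ is meaningful.)

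I would then argue by cases on the size of $\pi(x)\cup\pi(y)$. If $x=y$ there is nothing to prove, so assume $x\ne y$; since $x,y\in\Sigma(G)$, both $\pi(x)$ and $\pi(y)$ have at least two elements. If $|\pi(x)\cup\pi(y)|\ge 3$, the remark applied to the pair $(x,y)$ produces an edge between $x$ and $y$, whence $d(x,y)\le 1$. If instead $|\pi(x)\cup\pi(y)|=2$, then necessarily $\pi(x)=\pi(y)=\{p_1,p_2\}$ for two distinct primes $p_1,p_2$; invoking $\tilde\pi(G)\ge 3$ I pick a prime $r\in\pi(G)\setminus\{p_1,p_2\}$ and an element $g\in G$ of order $r$. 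Then $\pi(x)\cup\pi(g)=\pi(y)\cup\pi(g)=\{p_1,p_2,r\}$ has three elements, so by the remark $g$ is adjacent to both $x$ and $y$ in $\Gamma(G)$; moreover $g\ne x$ and $g\ne y$ because $r\notin\pi(x)\cup\pi(y)$, and therefore $d(x,y)\le d(x,g)+d(g,y)\le 2$.

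There is essentially no obstacle here: this lemma is a preparatory reduction, and the only points requiring (routine) care are the degenerate case $x=y$ and the verification that the auxiliary vertex $g$ is distinct from $x,y$ and genuinely belongs to $\Gamma(G)$ — both immediate once $r$ is chosen outside $\pi(x)\cup\pi(y)$, since being adjacent to $x$ already forces $g$ to be a non-isolated vertex.
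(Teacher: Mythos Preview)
Your proof is correct and follows essentially the same approach as the paper's: split according to whether $\pi(x)\cup\pi(y)$ already contains three primes (giving adjacency) or equals a two-element set $\{p_1,p_2\}$ (in which case an element of order a third prime $r$ serves as a common neighbour). The only cosmetic difference is that the paper first invokes the previous lemma to assume $\tilde\pi(g)=2$ for all $g\in\Sigma(G)$ before making the same dichotomy, whereas you handle the general case directly; your treatment of the degenerate points $x=y$ and $g\ne x,y$ is in fact slightly more careful than the paper's.
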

	\begin{proof}By the previous lemma we may assume $\pi^*(g)=2$ for every $g\in \Sigma(G).$
	If $\pi(x)\neq \pi(y),$ then $\pi(x)\cup \pi(y)$ contains at least 3 different primes and consequently $\tilde \pi(\langle x, y\rangle)\geq 3,$ and $x$ and $y$ are adjacent vertices of $\Gamma(G).$ If $\pi(x)=\pi(y),$ then there exists a prime $p$ in $\pi(G)\setminus \pi(x).$ If $z$ is an element of $G$ of order $p,$ then
	$x-z-y$ is a path in $\Gamma(G).$
	\end{proof}
	
\begin{lemma}
	\label{indbound}
	Let $G$ be a finite solvable group with $\iss\neq G$. Then for every $x\notin \iss$ there exists $y\in \Sigma(G)$ such that  $d(x,y)\leq 2$.
\end{lemma}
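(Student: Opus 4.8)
The plan is to argue by contradiction: assume no vertex of $\Sigma(G)$ lies within distance $2$ of $x$, reduce to a very constrained group, and then destroy that group by inspecting its normal structure.

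\emph{Reductions and the constrained situation.} If $x\in\Sigma(G)$ take $y=x$, so assume $x$ has prime‑power order $p^e$; and if $x=1$, a neighbour $z$ of $x$ satisfies $\langle z\rangle=\langle x,z\rangle\notin\ff$, hence $z\in\Sigma(G)$ with $d(x,z)=1$, so assume $e\ge 1$. Since $\iss\ne G$ we have $\tilde\pi(G)\ge 3$, whence $\Sigma(G)\ne\emptyset$ by Corollary~\ref{higman}. Now suppose $d(x,w)\ge 3$ for every $w\in\Sigma(G)$, pick $z\sim x$ (necessarily $z\notin\Sigma(G)$, say of order $q^f$, $f\ge1$), and put $L=\langle x,z\rangle$, so $\tilde\pi(L)\ge3$ and $\Sigma(L)\ne\emptyset$. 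For $w\in\Sigma(L)\subseteq\Sigma(G)$ we have $w\not\sim x$ and $w\not\sim z$ (the latter because $w\sim z$ would give $d(x,w)\le 2$); since $\pi(\langle x,w\rangle)\supseteq\{p\}\cup\pi(w)$ and $|\pi(w)|\ge2$, the inequality $\tilde\pi(\langle x,w\rangle)\le2$ forces $p\in\pi(w)$ and $|\pi(w)|=2$, and symmetrically $q\in\pi(w)$. Hence $p\ne q$ and $\pi(w)=\{p,q\}$ for every $w\in\Sigma(L)$; equivalently, \emph{every element of $L$ whose order is divisible by a prime outside $\{p,q\}$ has prime‑power order.}

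\emph{Deriving the contradiction.} Fix $r\in\pi(L)\setminus\{p,q\}$ and a minimal normal subgroup $N$ of $L$, an elementary abelian $\ell$‑group. If $\ell\notin\{p,q\}$, then $C_N(x)=1$ (else, for $1\ne n\in C_N(x)$, the element $xn$ has order $p^e\ell^c$, a non‑prime‑power element divisible by $\ell\notin\{p,q\}$, which is forbidden), and likewise $C_N(z)=1$. I would then run through the remaining possibilities — whether $\ell\mid|L/N|$, and whether various centralizers in $N$ of $x$, of $z$, or of a complement vanish — each alternative producing either a forbidden non‑prime‑power element of $L$ (by multiplying $x$, $z$, or a suitable element by a nontrivial fixed point in $N$) or a vertex of $\Sigma(L)$ adjacent to $x$ or to $z$ (hence at distance $\le2$ from $x$); this pins $L$ down as a Frobenius group with kernel $N\in\Syl_\ell(L)$ and complement $H\ni x$. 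In that group every element of $\Sigma(L)$ lies in a conjugate of $H$, distinct conjugates of $H$ meet trivially, and $\Sigma(L)\ne\emptyset$, so some $w\in\Sigma(L)$ lies outside the conjugate of $H$ containing $x$; then $\langle x,w\rangle$ lies in no conjugate of $H$, so meets $N$ nontrivially, giving $\ell\mid|\langle x,w\rangle|$, and together with $p,q\mid|\langle x,w\rangle|$ we get $\tilde\pi(\langle x,w\rangle)\ge3$, i.e.\ $x\sim w\in\Sigma(L)$ — contradiction. If instead $\ell\in\{p,q\}$ for every minimal normal subgroup, then $\fit(L)$ is a $\{p,q\}$‑group; using $C_L(\fit(L))\le\fit(L)$, every nontrivial $\{p,q\}'$‑element acts fixed‑point‑freely on $\fit(L)$, so a Hall $\{p,q\}'$‑subgroup $K\ne1$ of $L$ does too and (any non‑prime‑power element of $K$ being forbidden) consists of elements of prime‑power order; such a solvable Frobenius complement is a $t$‑group for a single prime $t$, which returns us to the previous picture — or, more uniformly, one passes to $L/N=\langle\overline x,\overline z\rangle$, which inherits all the standing hypotheses, and finishes by induction on $|G|$, lifting the resulting short path by adjusting a coset representative via Lemmas~\ref{rdivides} and~\ref{fpf} so that the missing prime reappears in $\langle x,-\rangle$.

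\emph{Main obstacle.} The real work is the structural bookkeeping in the second paragraph: systematically eliminating, one configuration at a time, every way a prime $r\notin\{p,q\}$ can divide $|L|$, using only that $L$ has no element of order $rs$ with $\{r,s\}\ne\{p,q\}$ and that no vertex of $\Sigma(L)$ is adjacent to $x$ or to $z$, together with the identification of prime‑power‑order solvable Frobenius complements. No individual step is deep, but the case analysis is where all the care lies, and Lemmas~\ref{rdivides} and~\ref{fpf} are exactly what make the coset‑adjustment moves legitimate.
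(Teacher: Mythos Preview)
Your first paragraph is correct and even slightly sharper than the paper's opening moves: the observation that every $w\in\Sigma(L)$ must satisfy $\pi(w)=\{p,q\}$ is a clean structural constraint that the paper does not isolate explicitly.

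The second paragraph, however, is not a proof but a programme. You write ``I would then run through the remaining possibilities'' and assert that this pins $L$ down as a Frobenius group with $N\in\Syl_\ell(L)$, but no such case analysis is carried out; this is precisely the content of the lemma. Several of the individual claims also need work: that some $w\in\Sigma(L)$ lies outside the conjugate of $H$ containing $x$ does not follow merely from $\Sigma(L)\ne\emptyset$ (you need that $z$ lies in a different conjugate $H^g$ and that $H^g\cap\Sigma(L)\ne\emptyset$, which uses conjugation-invariance of $\Sigma(L)$); that a solvable Frobenius complement with all elements of prime-power order is a $t$-group is true but requires the Zassenhaus structure of Frobenius complements; and the appeal to ``induction on $|G|$'' at the end sits uneasily with a proof that was set up by contradiction on a fixed $G$ --- in particular you have not checked that $\bar x$ is non-isolated in $\Gamma(L/N)$, which can fail when $N$ is a Sylow subgroup.

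By contrast, the paper's proof is a short direct induction on $|G|$: reduce to $G=\langle x,y\rangle$, take a minimal normal $r$-subgroup $N$, use induction unless $\pi(G/N)=\{p,q\}$ with $r\notin\{p,q\}$, reduce to $C_N(x)=1$, then invoke Corollary~\ref{higman} to produce some $z\in\Sigma(G)$ and handle the three possibilities $\pi(z)\in\{\{p,q\},\{q,r\},\{p,r\}\}$ in one line each using Lemma~\ref{fpf}. The entire argument is a few lines once the induction is set up; your route, even if completed, passes through Frobenius and $2$-Frobenius structure theory that the paper avoids entirely.
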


\begin{proof}
	We  prove the statement by induction on $\vert G \vert$. Let $x\in G\setminus \iss$. If $\tilde\pi(x)\geq 2$ the statement is clear, so we may assume that $\tilde\pi(x)=1$. As $x \notin \iss$, there exists $y\in G$ such that $x$ and $y$ are adjacent in $\Gamma(G).$ If $\langle x,y\rangle \neq G$, then by induction there exists $z\in \langle x,y \rangle$, with $\tilde \pi(z)\geq 2$, such that $d(x,z)\leq 2$. So we may assume that $G=\langle x,y \rangle$.

	Let $N$ be a minimal normal subgroup of $G$. There exists a prime $r$ such that $N$ is an elementary abelian $r$-group. 
	Notice that for every $1\neq n\in N$ and $g\in G,$ $\pi(\langle n,g\rangle)=\{r\}\cup \pi(g)$. Thus either $N\subseteq \iss$ or $G$ contains an element $\tilde g$ whose order is divisible by two primes in $\pi(G)\setminus \{r\}$ and all the non-trivial elements of $N$ are adjacent to $\tilde g\in \Gamma(G).$
	In particular we may assume $x\notin N.$

	 If $\tilde \pi(G/N) > 2$, 
	 then, since $G/N=\langle xN, yN\rangle,$ it follows that $xN \notin \isss(G/N)$, so by induction $d(xN,zN)\leq 2$, and consequently $d(x,z)\leq 2,$ for some element $z$ with $2\leq \tilde\pi(zN)\leq \tilde\pi(z).$ So we may assume $\pi(G/N)=\{p,q\}$ with $r\notin \{p,q\}.$
	Moreover, without loss of generality, we may assume that $x$ is a $p$-element. If there exists $1\neq n\in N$ such that $[x,n]=1$, then  $\pi(xn)=\{p,r\}$ and $xn$ is adjacent in $\Gamma(G)$ to $y$, implying $d(x,xn)=2$. So we may assume that $C_N(x)=1.$ By Corollary \ref{higman}, 
there exists an element $z\in G$ with $\tilde\pi(z)\geq 2$. If $pq$ divides $|z|,$ then by Lemma \ref{fpf} there exist $n\in N$ such that $r$ divides the order of $\langle x, zn\rangle.$ Thus $x$ is adjacent in $\Gamma(G)$ to the element $zn,$ whose order is divisible by $pq.$ If $qr$ divides $|z|$ then $x$ and $z$ are adjacent in $\Gamma(G).$ Finally, assume that $pr$ divides $|z|.$ In this case let $v$ be an element of $G$ of order $q.$
 %If $r$ divides $\vert \langle x,v \rangle \vert$, then $d(x,z)\leq d(x,v)+d(v,z)=2$.
By Lemma \ref{fpf} there exists $n\in N$ such that $r$ divides the order of $\langle x,vn \rangle,$ and $x-vn-z$ is a path in $\Gamma(G).$
\end{proof}

Recall that the prime graph $\Pi(G)$ of a finite group is the graph whose vertices are the primes dividing the order of $G$ and where two vertices $p$ and $q$ are joined by an edge if there is an element in $G$ of order $pq.$

\begin{lemma}\label{long} Let G be a finite solvable group with $G\neq \iss$ and $\tilde\pi(G) = 3.$  If ${{\rm{diam}}(\Gamma(G)})>4,$ then
$\Pi(G)$ is a path graph of length three.
\end{lemma}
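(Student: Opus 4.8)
The plan is to first make the standard reductions, then split according to the possible shapes of the prime graph. Since $\diam(\Gamma(G))>4$, Lemma~\ref{caso3} gives $\tilde\pi(g)\le 2$ for every $g\in G$, so $\Sigma(G)$ is the union of the sets $\Omega_{ab}(G)$ over the two-element subsets $\{a,b\}$ of $\pi(G)=\{p,q,r\}$; and if every element of $G$ had prime power order, Higman's theorem would force $\tilde\pi(G)\le 2$, so $\Pi(G)$ has at least one edge. As a graph on three vertices having an edge but not equal to a path on three vertices, $\Pi(G)$ is therefore a triangle or has exactly one edge, and in each case I want to prove $\diam(\Gamma(G))\le 4$. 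Two observations will be used throughout: (i) if $g\in\Sigma(G)$ and $h\notin\iss$, then $d(g,h)\le 4$, since by Lemma~\ref{indbound} some $h'\in\Sigma(G)$ has $d(h,h')\le 2$ while $d(g,h')\le 2$ by Lemma~\ref{twodiv}; (ii) if $x$ is an $a$-element and $\pi(w)\supseteq\{p,q,r\}\setminus\{a\}$, then $|\langle x,w\rangle|$ is divisible by $p$, $q$ and $r$, so $x$ and $w$ are adjacent. By (i) it is enough to bound $d(x,y)$ for non-isolated prime power elements $x,y$.

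If $\Pi(G)$ is a triangle, then for each prime $a$ with $\{b,c\}=\{p,q,r\}\setminus\{a\}$ the edge $b\sim c$ yields an element of order divisible by $bc$, which by (ii) is adjacent to every $a$-element and hence non-isolated; thus $\Omega_{bc}(G)\ne\emptyset$. Given non-isolated prime power elements $x$ (an $a$-element) and $y$ (an $a'$-element), I would pick $w_x\in\Omega_{bc}(G)$ adjacent to $x$ and $w_y\in\Omega_{b'c'}(G)$ adjacent to $y$, where $\{b,c\}$ and $\{b',c'\}$ are the complements of $a$ and $a'$; since $w_x,w_y\in\Sigma(G)$, Lemma~\ref{twodiv} gives $d(w_x,w_y)\le 2$, and hence $d(x,y)\le 1+2+1=4$.

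If $\Pi(G)$ has exactly one edge, say $p\sim q$, with $r$ isolated, then $\Sigma(G)=\Omega_{pq}(G)$. Now $\Omega_{pq}(G)\ne\emptyset$ (an element of order $pq$ is adjacent to any non-trivial $r$-element, hence non-isolated) and, since every non-trivial $r$-element is adjacent to every element of $\Omega_{pq}(G)$, the set $\Omega_r(G)$ consists of all non-trivial $r$-elements and is non-empty; the same adjacency property shows $\Omega_r(G)\cup\Omega_{pq}(G)$ has diameter at most $2$ in $\Gamma(G)$. The crux is the following claim, which I would prove by induction on the order of the group: \emph{if $H$ is a finite solvable group with $\tilde\pi(g)\le 2$ for all $g\in H$ and with $\Pi(H)$ equal to a single edge $p\sim q$ together with an isolated vertex $r$, then every non-isolated $p$-element and every non-isolated $q$-element of $H$ is adjacent in $\Gamma(H)$ to an element of $\Omega_r(H)\cup\Omega_{pq}(H)$.} Granting this for $G$, every non-isolated vertex of $\Gamma(G)$ lies at distance at most $1$ from $\Omega_r(G)\cup\Omega_{pq}(G)$, whence $d(x,y)\le 1+2+1=4$ for all non-isolated $x,y$, finishing this case.

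To prove the claim, let $x$ be a non-isolated $p$-element (the $q$-case being symmetric), choose $h$ with $pqr\mid|\langle x,h\rangle|$, and set $H=\langle x,h\rangle$. If $H\ne G$ then $\Pi(H)$ is again a single edge $p\sim q$ with $r$ isolated (it lies in $\Pi(G)$ and has an edge by Higman's theorem), $x$ is non-isolated in $\Gamma(H)$, and induction gives a neighbour of $x$ in $\Omega_r(H)\cup\Omega_{pq}(H)\subseteq\Omega_r(G)\cup\Omega_{pq}(G)$. So suppose $G=\langle x,h\rangle$, and let $N$ be a minimal normal subgroup, an elementary abelian $s$-group. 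If $s=r$, then $C_N(x)=1$ since there are no $pr$-elements; taking $y$ of order $pq$, Lemma~\ref{fpf} yields $n\in N$ with $r\mid|\langle x,yn\rangle|$, while the image of $y$ in $G/N$ still has order $pq$, so that $pq\mid|\langle x,yn\rangle|$; hence $x$ is adjacent to $yn\in\Omega_{pq}(G)$. If $s\in\{p,q\}$ and $N$ is not a Sylow $s$-subgroup, then $p$, $q$ and $r$ all divide $|G/N|$, the image $\bar x$ is a non-isolated $p$-element of the smaller group $G/N$ (which again satisfies the hypotheses of the claim), and I would lift to $G$ a neighbour of $\bar x$ furnished by induction. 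The case $s=p$ with $N$ a Sylow $p$-subgroup is impossible, since then $x\in N$ and $G/N=\langle\bar h\rangle$ is cyclic of order divisible by $qr$, producing a $qr$-element. This leaves the case $s=q$ with $N=O_q(G)$ a Sylow $q$-subgroup, which I expect to be the main obstacle, because $O_r(G)$ may then be trivial and Lemma~\ref{fpf} unavailable: writing $G=N\rtimes L$ with $L$ a Hall $\{p,r\}$-subgroup, the absence of $pr$-elements forces $L$ to be a Frobenius group, and according to whether its kernel is the Sylow $p$- or the Sylow $r$-subgroup one should be able to show either that $h$ can be taken to be an $r$-element, so that $x$ is adjacent to $h\in\Omega_r(G)$, or that $G$ is generated by two $p$-elements, a configuration that then has to be ruled out (or dealt with) separately. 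That last configuration is where I expect the real work to lie.
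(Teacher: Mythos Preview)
Your overall architecture---reduce to $\tilde\pi(g)\le 2$, dispose of the complete prime graph, and then prove that every non-isolated $p$- or $q$-element is adjacent to something in $\Omega_r(G)\cup\Omega_{pq}(G)$ when $r$ is isolated---matches the paper's. Where you diverge is in the proof of that last claim: the paper invokes the Gruenberg--Kegel theorem (Williams) to conclude that a solvable group with disconnected prime graph is Frobenius or $2$-Frobenius, and then runs through the four resulting structural cases. Your inductive approach, passing to $H=\langle x,h\rangle$ and then to $G/N$ for a minimal normal $N$, is a genuinely different and more elementary line, and the cases $s=r$, and $s\in\{p,q\}$ with $N$ not Sylow, are handled correctly (modulo the easy observation, which you omit, that $x\in N$ is impossible when $s=p$ since any element of a normal $p$-subgroup is isolated in the absence of $qr$-elements).

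The gap is exactly the case you flag: $s=q$ with $N$ the full Sylow $q$-subgroup. First, your assertion that ``the absence of $pr$-elements forces $L$ to be a Frobenius group'' is false as stated: $S_4$ is a $\{2,3\}$-group with every element of prime power order, yet it is $2$-Frobenius, not Frobenius. So you cannot simply split on ``kernel is Sylow $p$'' versus ``kernel is Sylow $r$'' without further input. Second, even in the genuinely Frobenius case with $r$-kernel, your sketch (``$h$ can be taken to be an $r$-element'') does not explain why some $r$-element $y$ must satisfy $q\mid|\langle x,y\rangle|$; this is precisely the delicate point, since $x$ and $y$ could a priori sit in a common Hall $\{p,r\}$-complement. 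The paper's resolution of this case (its case~(2)(b)) is not routine: it uses the global Frobenius/$2$-Frobenius structure of $G$ to locate a normal $r$-section $K/N$ on which $x$ acts fixed-point-freely, and then a conjugation argument through the centre of the Sylow $q$-subgroup to manufacture the required neighbour. Without something of comparable strength---either Gruenberg--Kegel for $G$ or a replacement giving you a normal $r$-section below $x$---your induction stalls here, and the proof is incomplete.
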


\begin{proof} Let $\pi(G)=\{p,q,r\}$ and assume $\diam(\Gamma(G))>4.$ Our aim is to prove that, for a suitable labeling of the three vertices, the only edges of the prime graph $\Pi(G)$ of $G$ are $(p,r)$ and $(r,q).$
To that end we will see that if $\Pi(G)$ is the complete graph $K_3$ or if it contains an isolated vertex, then ${{\rm{diam}}(\Gamma(G)})\leq 4.$

It can be easily seen that if $\Pi(G)$ is complete, then $\diam(\Gamma(G))\leq 3$.
So we may assume that one of the vertices of $\Pi(G),$ for example $r, $ is isolated. In this case $\Pi(G)$ has two components: $\{p,q\}$ and $\{r\}.$ It follows from a result of Gruenberg and Kegel (see \cite[Corollary]{pg}) that
$G$ is Frobenius or 2-Frobenius and  one of the
two components consists of the primes dividing the lower Frobenius complement.
We distinguish two cases:
\begin{enumerate}
\item\label{primo} $G$ is a Frobenius group. In this case $G=N\rtimes H$ where $N$ is nilpotent and the action of $H$ on $N$ is Frobenius.

\item $G$ has normal subgroups $N$ and $K$ such that $K$ is a Frobenius group with Frobenius kernel $N$, and $G/N$ is a Frobenius group with Frobenius kernel $K/N$.
\end{enumerate}

In case (1), we have two possibilities:

a) $N$ is a $\{p,q\}$-group and $H$ is an $r$-group.

b) $N$ is an $r$-group and $H$ is a $\{p,q\}$-group.

In case (a) all the elements of $G\setminus N$ are $r$-elements and the $p$-elements and $q$-elements of $G$ belong to $N$ and are isolated in $\Gamma(G).$ So if $g\in G\setminus \iss,$ either $g$ is an $r$-element or $pq$ divides $|g|.$ This implies that  $\diam(\Gamma(G))\leq 2$.

 In case (b) every element of $\Omega_{pq}(G)$ is adjacent to every element of $\Omega_r(G)$. We need to investigate the behavior of the elements in $\Omega:=\Omega_p(G)\cup \Omega_q(G).$ Let $x\in \Omega.$ By Corollary \ref{higman}, there exists $z\in \Omega_{pq}(G).$
Since $N$ is the Frobenius kernel of $G$ and $x\notin N,$ we have $C_N(x)=1$ so, by Lemma \ref{fpf}, there exists $n$ in $N$
 such that $r$ divides the order of $\langle x, zn\rangle.$ Thus $x$ is adjacent to $zn \in \Omega_{pq}(G).$ We have so proved that every element in $\Omega$ is adjacent in $\Gamma(G)$ to some element in $\Omega_{pq}(G).$ By Lemma \ref{twodiv}, we can conclude that $\diam(\Gamma(G))\leq 4$.

Let us move to case (2). Again we have two possibilities:

a) $K/N$ is a nilpotent $\{p,q\}$-group and $G/K$ and $N$ are $r$-groups.

b) $K/N$ is an $r$-group and $G/K$ and $N$ are $\{p,q\}$-groups.

In case (a) we may repeat the previous argument: all the elements of $\Omega:=\Omega_p(G)\cup \Omega_q(G)$ belong to $K$ and are adjacent to an element of $\Omega_{pq}(G).$ 

In case (b) we start by noticing that
$\Omega\cap N=\emptyset.$ First assume that $G$ contains an element $z$ with $|zK|=pq.$ Let $x\in \Omega.$ It must be $C_{K/N}(xK)=1,$ otherwise $G$ would contain an element of order $rt$, being $t$ the prime in $\{p,q\}$ dividing the order of $x$. So by Lemma \ref{fpf}, there exists $k\in K$ such that $r$ divides the order
of $\langle x,zk\rangle N/N$. In particular $x$ and $zk$ are adjacent vertices in $\Gamma(G).$ We have so proved that every element of $\Omega$ is adjacent to a vertex in $\Omega_{pq}(G)$ and this suffices to conclude that $\diam(\Gamma(G))\leq 4$.

 To conclude our discussion of case (b), we remain with the case when $G/K$ does not contain elements of order $pq.$
 
  We claim that in this case $\tilde\pi(G/K)=1.$ To prove this, recall that every Sylow subgroup of a Frobenius complement is cyclic or generalized quaternion (see for example \cite[Corollary 6.17]{Isaacs}).
This implies in particular that the Frobenius complement $K/N$ is either a cyclic $r$-group or a generalized quaternion group. If $K/N$ is cyclic, then  the Frobenius complement $G/K$,
being isomorphic to a subgroup of $\aut(K/N)$, is an abelian group; its Sylow subgroups must be cyclic and therefore $G/K$ is a cyclic $\{p,q\}$-group. However we are assuming that $G/K$ does not contain elements of order $pq$ so $G/K$ is a $p$-group or a $q$-group.
If $K/N$ is generalized quaternion, then $G/K$ is a Frobenius complement of odd order. Hence, by \cite[Lemma 2.4]{parker},  any two elements in $G/K$  of coprime order commute, so if $p$ and $q$ both divide the order of $G/K$, then there must be an element in $G/K$ whose order is divisible by $pq$, against our assumption.

So our claim is proved and we may assume that $G/K$ is a $p$-group. In this case all the $q$-elements of $G$ belong to $N$ and are isolated in $\Gamma(G).$ Thus the vertex set of $\Gamma(G)$ coincides
with $\Omega_p(G) \cup \Omega_r(G) \cup \Omega_{pq}(G).$ 

Now we claim that every element $g\in \Omega_p(G)$ is adjacent to an element in $\Omega_r(G) \cup \Omega_{pq}(G).$ This would allow to conclude $\diam(\Gamma(G))\leq 4$.

In order to prove the claim, let $y$ be an element of $G$ order $r$. If $g$ is adjacent to $y$ we are done, so assume that $\langle g, y\rangle$ is a $\{p,r\}$-group.   
Now let $Q$ be the Sylow $q$-subgroup of $N$, and let $Z=Z(Q)$ be the center of $Q$. If $[gy,z]=1$ for some $1\neq z \in Z$, then $gyz \in \Omega_{pq}$ and the order of $\langle g,gyz \rangle =\langle g,yz \rangle$ is divisible by $pqr$. This implies that $g$ and $gyz$ are adjacent vertices of $\Gamma(G).$ So we may assume that $C_Z(gy)=1$. Since the action of $G/K$ over $K/N$ is Frobenius, $gy=g^xn$ for some $x\in G$ and $n \in N$ and, for any $1\neq z\in Z$, $[g,z]=([g^x,z^x])^{x^{-1}}=([gyn^{-1},z^x])^{x^{-1}}=[gy,z^x]^{x^{-1}}\neq 1$, hence $C_N(g)=1.$
By Lemma \ref{fpf} there exist $\tilde z \in Z$ such that $\langle g,y\tilde z \rangle$ has order divible by $q$. Thus implies that
$g$ is adjacent to the $r$-element $y\tilde z.$
\end{proof}

\section{Proof of the main result}

\begin{lemma}\label{soluno}If $G$ is soluble and $\tilde\pi(G)\geq 4,$ then  ${{\rm{diam}}(\Gamma(G)})\leq 3.$
\end{lemma}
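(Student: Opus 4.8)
The aim is to show that when $G$ is soluble with $\tilde\pi(G)\geq 4$, any two non-isolated vertices are at distance at most $3$. The key structural input is Corollary \ref{higman} (which guarantees $\Sigma(G)\neq\emptyset$) together with Lemma \ref{twodiv} and Lemma \ref{indbound}. Since $\tilde\pi(G)\geq 4 \geq 3$, Lemma \ref{twodiv} already tells us that any two elements of $\Sigma(G)$ are at distance at most $2$. So the reduction is: it suffices to show that every non-isolated $x$ is adjacent (distance $1$, not just distance $2$) to some element of $\Sigma(G)$. Combining this with the distance-$\leq 2$ bound inside $\Sigma(G)$ would give, for arbitrary non-isolated $x,y$, a path $x - x' - \cdots - y' - y$ of length at most $1 + 2 + 1 = 4$; so I will need to be slightly more careful to squeeze out the extra unit and land at $3$.

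\textbf{First step: reduce to $\tilde\pi(x)=1$.} If $\tilde\pi(x)\geq 2$ then $x\in\Sigma(G)$ already. For two such vertices $x,y\in\Sigma(G)$ we are done by Lemma \ref{twodiv}. So the real work is when at least one endpoint, say $x$, is a prime-power-order element. Write $p$ for the prime with $\pi(x)=\{p\}$. Since $\tilde\pi(G)\geq 4$, there are at least three primes $q,s,t$ in $\pi(G)\setminus\{p\}$.

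\textbf{Second step: find an element of $\Sigma(G)$ adjacent to $x$, whose prime set avoids $p$.} The point of having $\geq 4$ primes is redundancy. I would argue as in the proof of Lemma \ref{indbound}: pass to a minimal normal subgroup $N$, an elementary abelian $r$-group. If $r\notin\pi(x)$ and there is $1\neq n\in N$ with $[x,n]=1$, then $xn\in\Omega_{pr}(G)\subseteq\Sigma(G)$ and $\langle x, xn\rangle=\langle x,n\rangle$ has order divisible only by $p$ and $r$ — so $xn$ need not be adjacent to $x$ in the non-$\ff$-graph, only of prime-set size $2$. So instead I want $x$ adjacent to an element $w$ of $\Sigma(G)$ with $p\notin\pi(w)$ and $\pi(w)\cup\{p\}$ already of size $3$: then $\langle x,w\rangle\notin\ff$. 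The natural candidate: an element $w$ with $\pi(w)=\{q,s\}$ (two primes $\neq p$) such that $p$ also divides $|\langle x,w\rangle|$ — then $x-w$ is an edge. Using Corollary \ref{higman} there is $z\in\Sigma(G)$; if already $\pi(z)$ contains two primes $\neq p$, then $x$ and $z$ are adjacent (three primes in $\langle x,z\rangle$) and $z\in\Sigma(G)$. Otherwise $p\in\pi(z)$, say $\pi(z)=\{p,q\}$; then I would look inside $\langle x,z\rangle$ or quotients, or invoke an induction on $|G|$ to produce, from $z$, an element of $\Sigma(G)$ whose prime set includes two primes different from $p$, and reconcile adjacency with $x$ via Lemma \ref{fpf} applied to a minimal normal $\{q,s,t\}$-related subgroup.

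\textbf{Third step: assemble the bound.} Once every non-isolated $x$ with $\tilde\pi(x)=1$ has a neighbour $x'\in\Sigma(G)$, take two non-isolated vertices $x,y$. If both are in $\Sigma(G)$, done by Lemma \ref{twodiv}. If exactly one, say $y\notin\Sigma(G)$, let $y'\in\Sigma(G)$ be a neighbour of $y$; then $d(x,y)\le d(x,y')+1\le 2+1=3$. If neither is in $\Sigma(G)$, take neighbours $x',y'\in\Sigma(G)$; naively $d(x,y)\le 1+d(x',y')+1\le 4$, which is off by one. To fix this I would instead choose $x'$ and $y'$ more carefully so that $d(x',y')\le 1$ rather than $2$: by Lemma \ref{twodiv}'s proof, $x'$ and $y'$ fail to be adjacent only when $\pi(x')=\pi(y')$, in which case I reroute — pick, for one of them, a neighbour in $\Sigma(G)$ with a \emph{different} prime set (possible precisely because $\tilde\pi(G)\ge 4$ gives room to choose the two "extra'' primes differently on the two sides), making $x'$ and $y'$ adjacent. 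Then $d(x,y)\le 1+1+1=3$.

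\textbf{Main obstacle.} The delicate point is the second step: producing, for an arbitrary prime-power element $x$, an actual \emph{edge} (not merely a distance-$2$ connection) to an element of $\Sigma(G)$ with a prescribed enlarged prime set, and doing so uniformly in the quotient structure. The induction on $|G|$ interacts with the case analysis on whether $\tilde\pi(G/N)$ drops to $3$ (where Lemma \ref{long} would have to be invoked, but only guarantees diameter $\le 4$ in the bad path-graph case) versus staying $\ge 4$. I expect the cleanest route is to show that with $\tilde\pi(G)\ge 4$ the prime graph $\Pi(G)$ is never a path of length three on its support restricted to any three primes used, so the $\diam\le 4$ exception of Lemma \ref{long} never bites, and the fixed-prime-set obstruction in Step 3 can always be sidestepped.
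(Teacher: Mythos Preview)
Your overall architecture---push every prime-power vertex to a neighbour in $\Sigma(G)$ whose prime set avoids that prime, then glue via Lemma~\ref{twodiv}---is the right shape, and is in fact how the paper organises the argument. But there is a genuine gap: your Step~2 is never proved. The inductive sketch via a minimal normal subgroup and Lemma~\ref{fpf} does not obviously produce, from an arbitrary $z\in\Sigma(G)$ with $p\in\pi(z)$, a new element of $\Sigma(G)$ whose two primes both avoid $p$; Lemma~\ref{indbound} only supplies a $\Sigma(G)$-element at distance $\le 2$, with no control on its prime set, and the quotient $G/N$ may well drop to $\tilde\pi=3$, where your appeal to Lemma~\ref{long} gives only a diameter bound, not the adjacency you need. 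The paper closes this hole with an external ingredient you do not cite: Lucido's result \cite[Proposition~1]{lucido} that in a finite soluble group, among any three primes of $\pi(G)$ some two occur together as $\pi(g)$ for an element $g$. Since $\tilde\pi(G)\ge 4$, one simply picks three primes in $\pi(G)\setminus\{p\}$ and obtains $w\in\Sigma(G)$ with $\pi(w)\cap\{p\}=\emptyset$; then $\langle x,w\rangle$ sees at least three primes and $x\sim w$. This single line replaces all of your inductive machinery.

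Your Step~3 is also not finished: you stop at $1+2+1=4$ and propose ``rerouting'', but once Step~2 is stated with the constraint $p\notin\pi(x')$, no rerouting is needed. If $\pi(x)=\{p_1\}$, $\pi(y)=\{p_2\}$ and $\pi(x')=\pi(y')$, then $p_1,p_2\notin\pi(x')$, so $\langle x',y\rangle$ already involves three primes and $x-x'-y$ has length~$2$; if $\pi(x')\neq\pi(y')$ then $x'\sim y'$ and $x-x'-y'-y$ has length~$3$. The paper carries out exactly this case split (phrased as a direct analysis of $\pi(x)\cup\pi(y)$), using Lucido's result at each branch. Your closing remarks about the prime graph $\Pi(G)$ never being a path are a red herring: nothing of that sort is required once Lucido's proposition is in hand.
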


\begin{proof}
By Lemma \ref{caso3}, we may assume $\tilde \pi(x)\leq 2$ for every $x\in G.$  If $p, q, r$ are three distinct primes in $\pi(G),$
then $G$ contains an element of order the product of two of these primes (see \cite[Proposition 1]{lucido}). In particular assume $\tilde\pi(G)\geq 4$ and let $x$ and $y$ be two distinct nonidentity elements of $G.$ Let $\pi=\pi(x) \cup \pi(y).$ If $|\pi|>2,$ then $x$ and $y$ are adjacent vertices of $\Gamma(G).$ So we may assume $|\pi|\leq 2.$ If there exists $p\in \pi(x)\cap \pi(y),$ then take
three different primes $q_1, q_2, q_3 \in \pi(G)\setminus \{p\}.$
Then $G$ contains an element $z$ whose order is the product of two of these primes and $x-z-y$ is a path is $\Gamma(G).$ So we may assume
$\pi(x)=\{p_1\}$ and $\pi(y)=\{p_2\},$ with $p_1\neq p_2.$ Let $r_1, r_2$ be two different primes in $\pi(G)\setminus \{p_1,p_2\}$. There exists in $G$ an element $z_1$ whose order is the product of two primes in $\{r_1,r_2,p_2\}.$ If
$|z_1|=r_1r_2,$ then $x-z-y$ is a path is $\Gamma(G).$ If $p_2$ divides $|z_1|$, then we may consider an element $z_2$ whose order is the product of two primes in $\{r_1,r_2,p_1\}.$ If $z_2=|r_1r_2|,$ then
$x-z-y$ is a path is $\Gamma(G).$ If $p_1$ divides $|z_2|,$ then $x-z_1-z_2-y$ is a path in $\Gamma(G).$
\end{proof}

\begin{lemma}\label{soldue}If $G$ is soluble and $\tilde\pi(G)=3,$ then 
${{\rm{diam}}(\Gamma(G)})\leq 5.$
\end{lemma}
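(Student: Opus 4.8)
The strategy is to run through the same case analysis on the prime graph $\Pi(G)$ that was used in Lemma~\ref{long}, but now extracting the sharper bound $\diam(\Gamma(G))\le 5$ in the only remaining hard configuration, namely when $\Pi(G)$ is a path $p-r-q$. Write $\pi(G)=\{p,q,r\}$. First I would dispose of the easy shapes of $\Pi(G)$: if $\Pi(G)$ is complete then $\diam(\Gamma(G))\le 3$, and if $\Pi(G)$ has an isolated vertex then, by the Gruenberg--Kegel analysis carried out inside the proof of Lemma~\ref{long}, we already get $\diam(\Gamma(G))\le 4$. Hence by Lemma~\ref{long} itself it is enough to treat the case where $\Pi(G)$ is the path with edges $(p,r)$ and $(r,q)$ and no edge $(p,q)$.

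In that configuration every element $g\notin\iss$ lies in $\Omega_p(G)\cup\Omega_q(G)\cup\Omega_r(G)\cup\Omega_{pr}(G)\cup\Omega_{qr}(G)$, since there is no element of order divisible by $pq$ and (because $\tilde\pi(g)\le 2$ for all $g$, by Lemma~\ref{caso3}) none of order divisible by $pqr$. The plan is to show that the ``worst'' vertices are those in $\Omega_p(G)\cup\Omega_q(G)$ and that each such vertex is within distance $2$ of the set $\Omega_r(G)$, which then acts as a hub. Concretely: (i) any two elements of $\Omega_r(G)$ are at distance $\le 2$ from each other (pass through an element of $\Omega_{pr}(G)$, which exists by Corollary~\ref{higman} applied appropriately, or argue directly that they have a common neighbour of order $pr$ or $qr$); (ii) an element of $\Omega_{pr}(G)$ is adjacent to some element of $\Omega_{qr}(G)$ or $\Omega_q(G)$, and symmetrically; (iii) each $x\in\Omega_p(G)$ satisfies $d(x,z)\le 2$ for some $z\in\Omega_r(G)$. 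Granting (i)--(iii), a vertex in $\Omega_p(G)$ reaches a vertex in $\Omega_q(G)$ via $\Omega_p \to \Omega_r \to \Omega_r \to \Omega_q$ or similar, giving a path of length at most $2+1+2=5$.

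The crux is step (iii), and it is here that I would reuse the Frobenius/$2$-Frobenius machinery from Lemma~\ref{long}. By Gruenberg--Kegel, since $r$ is \emph{not} adjacent to exactly one of the other primes is false here---rather $r$ is the central vertex---one instead exploits that $\{p\}$ and $\{q\}$ are the non-neighbours: removing $r$ disconnects $\Pi(G)$ into $\{p\}$ and $\{q\}$, but this does not directly give a Frobenius structure. So instead I would argue locally: fix $x\in\Omega_p(G)$ and an element $y$ of order $r$; if $x,y$ are non-adjacent then $\langle x,y\rangle$ is a $\{p,r\}$-group, and I would locate a normal $q$-subgroup $Q$ (for instance the Sylow $q$-subgroup of a suitable normal subgroup, or $O_q$ of the relevant Frobenius kernel) on which $xy$ acts, split into the two sub-cases $C_{Z(Q)}(xy)\ne 1$ and $C_{Z(Q)}(xy)=1$ exactly as in the last paragraph of the proof of Lemma~\ref{long}: in the first case $xyz\in\Omega_{pq}(G)$ contradicts $(p,q)\notin\Pi(G)$, so actually the useful conclusion is that $\langle x, yz\rangle$ picks up the prime $q$ or $r$ appropriately and $x$ becomes adjacent to an element of $\Omega_r(G)\cup\Omega_{qr}(G)$; in the second case fixed-point-freeness plus Lemma~\ref{fpf} produces $\tilde z\in Z(Q)$ with $q\mid |\langle x,y\tilde z\rangle|$, so $x$ is adjacent to the $r$-element (or $qr$-element) $y\tilde z$.

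\textbf{Main obstacle.} The delicate point is that, unlike in Lemma~\ref{long}, here we may not be able to force a single global Frobenius or $2$-Frobenius structure on $G$ from the shape of $\Pi(G)$ alone (a path on three vertices is compatible with several Gruenberg--Kegel pictures). So the argument for step (iii) must be made to work uniformly, presumably by an induction on $|G|$ (reducing to $G=\langle x,y\rangle N$ with $N$ a minimal normal $t$-subgroup as in Lemma~\ref{indbound}) together with the fixed-point-free case analysis above; controlling how the prime graph and the sets $\Omega_\bullet$ behave on passing to $G/N$ is where the real care is needed, and is the step I expect to occupy most of the proof.
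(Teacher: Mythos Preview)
Your reduction to the path configuration $p-r-q$ via Lemma~\ref{long} is correct, but the subsequent plan misidentifies which vertices are difficult. In the path $p-r-q$, an element $x\in\Omega_p(G)$ is \emph{automatically adjacent} to every element of $\Omega_{qr}(G)$ (and such elements exist), since $\langle x,w\rangle$ then has order divisible by $p,q,r$; symmetrically for $\Omega_q(G)$. So your step~(iii) is trivial at distance~$1$, not the crux. The hard vertices are those in $\Omega_r(G)$: an $r$-element $x$ would be instantly adjacent to any $pq$-element, but no such element exists, and $x$ is \emph{not} in general adjacent to elements of $\Omega_{pr}(G)$ or $\Omega_{qr}(G)$ --- if $w\in\Omega_{pr}(G)$ then $\langle x,w\rangle$ is typically just a $\{p,r\}$-group (e.g.\ whenever $x$ and $w$ lie in a common Hall $\{p,r\}$-subgroup, which exists since $G$ is soluble). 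Hence your justification for step~(i), ``pass through an element of $\Omega_{pr}(G)$'', does not work, and the claim that any two elements of $\Omega_r(G)$ are at distance $\le 2$ is unsupported and in fact is exactly where the difficulty lies.

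The paper's proof proceeds in the opposite direction. It first observes (as above) that any $x\notin\Omega_r(G)$ is at distance~$1$ from $\Sigma(G)$, and then invokes Lemma~\ref{indbound} to get $d(x,z)\le 2$ with $z\in\Sigma(G)$ for $x\in\Omega_r(G)$. Combined with Lemma~\ref{twodiv} this already handles all pairs except when \emph{both} endpoints lie in the set $\Lambda\subseteq\Omega_r(G)$ of $r$-elements whose entire neighbourhood is contained in $\Omega_r(G)$. For $g\in\Lambda$ the paper works \emph{locally} inside $H=\langle g,z\rangle$ (with $z$ an adjacent $r$-element), takes $N=O_r(H)$ and a minimal normal subgroup $A/N$ of $H/N$, and shows via a centraliser/fixed-point-free dichotomy (using Lemma~\ref{fpf}) that at distance~$2$ from $g$ one can reach elements $a_1,a_2$ with $\pi(a_1)\cup\pi(a_2)=\{p,q,r\}$. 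This is the genuine analogue of the computation you sketched, but applied to the $r$-elements rather than the $p$-elements; your Frobenius/$2$-Frobenius discussion and the $C_{Z(Q)}(xy)$ case split are aimed at the wrong target.
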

\begin{proof}
 By Lemma \ref{long}, we may assume that the prime graph of $G$ is of the form $p-r-q$.

  Notice that if $x\notin \Omega_r(G),$ then $d(x,z)=1$ for some $z\in \Sigma(G)$. Indeed, if $p$ divides $\vert x\vert$ then $x$ is adjacent to an element in $\Omega_{rq}(G),$ while if $q$ divides  $|x|$ then $x$ is adjacent to an element in $\Omega_{rp}(G)$. If $x\in \Omega_r(G),$ then, by Lemma \ref{indbound}, $d(x,z)\leq 2$ for some $z\in \Sigma(G).$ Hence, using Lemma \ref{twodiv}, we conclude that if $x$ and $y$ are non-isolated vertices of $\Gamma(G)$ and $x$ is not an $r$-element, then $d(x,y)\leq 5$.

Assume now that $g\in \Omega_r(G)$ and that $g$ is adjacent to $x\notin \Omega_r(G).$ Let $y \notin \iss.$ By Lemma \ref{twodiv}, there exists $z\in \Sigma(G)$ with $d(z,y)\leq 2.$ If $x$ and $z$ are adjacent in $\Gamma(G),$ then $d(g,y)\leq d(g,x)+d(x,z)+d(z,y)\leq 4.$ If not, then $\pi(x)\subseteq \pi(z)$ and there exists $v\in \Sigma(G)$ with $\pi(v)\cup \pi(x)=\{p,q,r\}.$ But then $d(g,y)\leq d(g,x)+d(x,v)+d(v,z)+d(z,y)\leq 5.$

We remain with the case when $g$ belongs to the set $\Lambda$ of the elements of $\Omega_r(G)$ whose neighbourhood is contained in $\Omega_r(G).$ We are going to solve this case proving the following claim: there exist $a_1,a_2\in G\setminus \iss$ with the following properties:
\begin{enumerate}
	\item $d(g,a_1)=d(g,a_2)=2;$
 %either $\pi(a_1)=\{q,r\}$ and $p\in \pi(a_2)$, or  $\pi(a_2)=\{p,r\}$ and $q\in \pi(a_2).$
	\item $\pi(a_1)\cup \pi(a_2)=\{p,q,r\}.$
\end{enumerate}
In order to prove the claim, let $g\in \Lambda$ and let $z\in \Omega_r(G)$ be adjacent to $g \in \Gamma(G).$ Let $H=\langle g,z\rangle$ and $N=O_r(H).$
%We will work modulo $N$, and will use the notation $\overline{H}=H/N$ and $\overline{h}=hN$ for every $h\in H$. 
Notice that neither $g$ nor $z$ belongs to $N$ since $\tilde\pi(\langle n,h\rangle)\leq 2$ for all $n\in N$ and $h\in H$.
Let $A/N$ be a minimal normal subgroup of $H/N$. Then $A/N$ is either a $p$-group or a $q$-group. Assume for example that it is a $p$-group
(a symmetric argument works if $A/N$ is a $q$-group). It must be $C_{A/N}(zN)=N/N.$ Indeed, if $[z,a]\in N$ for some $a\in A\setminus N$, then $rp$ divides $|za|$  and consequently $pqr$ divides $|\langle g,za\rangle|$ and $g$ is adjacent to $za$, in contradiction with the definition of $\Lambda.$
Notice also that it follows from Lemma \ref{fpf} that if $h$ is an $r$-element of $H$ and $C_{A/N}(hN)=N/N$, then $h$ is adjacent to an element of $H$ whose order is divisible by $q.$ In particular, since by definition all the elements of $G$ that are adjacent to $g$ are $r$-elements, $C_{A/N}(gN)\neq N/N$, so $[g,a]\in N$ for some $a\in A\setminus N$. 
But then $pr$ divides $|ga|$ and $\Gamma(G)$ contains the path $g-z-ga.$
Moreover from what we said above, since $C_{A/N}(zN)=N/N,$  the graph $\Gamma(G)$ contains a path $g-z-u$ with $q$ dividing the order of $u$. So we have proved our the claim.

We can now prove that if $y_1,y_2\in \Lambda$, then $d(y_1,y_2)\leq 5$. Choose $z_1,z_2\in \Sigma(G)$ such that $d(y_1,z_1)=d(y_2,z_2)=2$. If $\pi(z_1)\neq \pi(z_2),$ then $d(z_1,z_2)=1$ and therefore $d(y_1,y_2)=5$. If $\pi(z_1)=\pi(z_2),$ then, by our last claim, there exists $u$ such that $d(y_1,u)\leq 2$ and $\pi(u)\cup \pi(z_1)=\{p,q,r\}.$ But then $d(y_1,y_2)\leq d(y_1,u)+d(u,z_2)+d(z_2,y_2)\leq 5.$
\end{proof}

\begin{proof}[Proof of Theorem \ref{main}] Let $R=R(G)$ be the soluble radical of $G$ and consider the following subsets of $G:$
$$A=\isss(R)\setminus \iss,\quad B=R \setminus \isss(R),\quad C=G\setminus R.$$
Let us summarize a list of information concerning $\Gamma(G)$ that will be useful to prove our bound on the diameter of $\Gamma(G).$
\begin{enumerate}
\item[(i)]  $d(c_1,c_2)\leq 2$ for every $c_1,c_2\in C$. This follows from
\cite[Theorem 6.4]{gu}, stating that if $c_1,c_2 \notin R,$ then there exists $z\in G$ such that $\langle c_1,z\rangle$ and $\langle c_2,z\rangle$ are not solvable.
\item [(ii)] For every $a\in A,$ there exists $c\in C$ such that $d(a,c)=1$. This follows immediately from the definitions of $A$ and $C.$
\item [(iii)] If $B\neq\emptyset,$ then for every $b\in B,$ there exists $s\in \Sigma(G)\cap R$ with $d(b,s)\leq 2.$ This follows immediately from
 Lemma \ref{indbound}.
%\item [iv)] $d(s_1,s_2)\leq 2$ for every $s_1,s_2\in \Sigma(G)$. This follows from Lemma \ref{twodiv}.

\item [iv)] $d(c,s)\leq 3$ for every $c\in C$ and $s\in \Sigma(G).$ If
$\tilde \pi(s)\geq 3,$ then $c$ and $s$ are adjacent in $\Gamma(G).$ So we may assume $\tilde\pi(s)=2.$ Since $G/R$ is not solvable, $\tilde \pi(G/R)\geq 3,$ so there exists $p\in \tilde \pi(G/R) \setminus \tilde \pi(s).$ Let $z$ be a $p$-element in $C.$ Then $d(c,z)\leq 2$ by $(i),$ so $d(c,s)\leq d(c,z)+d(z,s)\leq 3.$
\end{enumerate}
By (i) $\isss(G) \subseteq \isss(R),$ so $G\setminus \isss(G)$ is the disjoint union of the three subsets $A, B,C$. 
We can conclude our proof by analyzing the different possibilities:

\noindent If $a_1,a_2\in A$, then by (ii) there exist $c_1,c_2\in C$ with $d(a_1,c_1)=d(a_2,c_2)=1$. Hence, by (i), $d(a_1,a_2)\leq d(a_1,c_1)+d(c_1,c_2)+d(c_2,a_2)\leq 1+2+1=4$.
 
\noindent  If $a\in A$ and $b\in B$, then, by (ii), there exist $c\in C$ which is adjacent to $a$ in $\Gamma(G).$ Let $H=R\langle c\rangle.$
Notice that $H$ is solvable and $a$ and $b$ are non-isolated vertices of $\Gamma(H),$ so, by Lemmas \ref{soluno} and \ref{soldue}, $d(a,b)\leq 5.$

%\noindent  If $b\in B$ and $c\in C$, then, by (ii) and (iii), there exist $d\in D$ and $s\in \Sigma(G)$ with $d(b,d)=1$ and $d(c,s)\leq 2.$ Moreover, by (v), $d(d,s)\leq 3$, so $d(b,c)\leq d(b,d)+d(d,s)+d(s,c)\leq 1+3+2= 6$.

\noindent If $a\in A$ and $c\in C$, then, by (ii), there exists $c^*\in C$ such that $d(a,c^*)=1$, and, by (i), $d(c,c')\leq 2$, thus $d(a,c)\leq d(a,c^*)+d(c^*,c)\leq 3$.

\noindent If $b_1,b_2\in B$, then $b_1, b_2$ are non-isolated vertices of $\Gamma(R),$ so, by Lemmas \ref{soluno} and \ref{soldue}, $d(b_1,b_2)\leq 5.$

\noindent If $b\in B$ and $c\in C$, then, by (iii), there exists $s\in \Sigma(G)$ such that  $d(b,s)\leq 2$ and, by (iv), $d(b,c)\leq d(b,s)+d(s,c)\leq 2+3=5.$
\end{proof}

\end{document}